\pgfplotsset{compat = newest}
\renewenvironment{description}[1][0pt]
  {\list{}{\labelwidth=0pt \leftmargin=#1
   }}
  {\endlist}
\newtheorem{theorem}{Theorem}
\newtheorem{lemma}[theorem]{Lemma}
\numberwithin{equation}{section} 
\numberwithin{theorem}{section}
\begin{document}
	\begin{frontmatter}
	
	\title{	An Application of Fractional Calculus to Column Theory}
		
		\author[jv]{Jos\'{e} Villa-Morales\corref{co1}} 
		\ead{jvilla@correo.uaa.mx}
		\address[jv]{Departamento de Matem\'{a}ticas y F\'{\i}sica,
			Universidad Aut\'onoma de Aguascalientes,
			Av. Universidad 940, C.P. 20100 Aguascalientes, Ags., M\'exico}
			
			\author[mr]{Manuel Ram\'irez-Aranda} 
		\ead{manuel.ramirez@edu.uaa.mx}
		\address[mr]{Departamento de Matem\'{a}ticas y F\'{\i}sica,
			Universidad Aut\'onoma de Aguascalientes,
			Av. Universidad 940, C.P. 20100 Aguascalientes, Ags., M\'exico}

		\cortext[co1]{Corresponding author}
		
\begin{abstract}

In this article, we employ a fractional version of the radius of curvature in Euler's equation for column buckling, enabling us to derive a fractional differential equation in the Caputo sense. We solve this equation and demonstrate that for certain values of the fractional parameter, there exists a critical buckling force. Additionally, we provide a numerical scheme for accurately approximating this critical force.
\end{abstract}

\begin{keyword}  Eulerian column theory, fractional differential equations, Caputo's derivative.
\MSC  34A12, 34A08, 74B05, 74A10 
\end{keyword}
\end{frontmatter}
	
\section{Introduction}

The analysis of buckling in materials, particularly in columns, is of crucial importance in applied mathematics due to its numerous practical applications. This study is especially relevant in the construction industry, where the buckling of columns made of steel, aluminum, or plastic materials plays a key role. Similarly, this area of research is also significant in other sectors, such as aviation and the automotive industry, among others. For further exploration of this topic, it is recommended to consult sources \cite{domokos1997constrained, wang2004exact,luo2004equilibrium,jena2019novel} and the references included therein. 

In the study of column buckling, a pivotal aspect is the determination of the maximum force a column can endure prior to buckling, see \cite{cocskun2009determination}. This article delves into the investigation of this critical force, employing an analysis of a fractional differential equation in the Caputo framework, which we describe below.

Suppose we have a beam of length $l$ with one flexible end resting on the ground and a force $P$ is applied to the other flexible end. Under certain assumptions on the beam, the well known Euler equation (see \cite{Spi}) says that at a point $x$ the external moment $M(x)$ of the deflected beam is inversely proportional to the ratio of curvature $\rho (x)$. In symbols
\begin{equation}
M(x)=\frac{EI}{\rho(x)}, \label{EEq}
\end{equation}
where $E$ is the Young’s modulus of the material and $I$ denotes the second moment of inertia of the cross-section area. 
The displacement of the beam at point $x \in [0,l]$ is represented by $y(x)$, which corresponds to the elastic curve of the beam (refer to Figure \ref{FigB}).
\begin{figure}[ht]
\centering

\begin{tikzpicture}[scale=1]

\draw[-,line width = 0pt] (0,0) -- (2,0.5);
\draw[-] (0,5) -- (2,5.5);
\draw[dashed,black,-] (1,0.35) -- (1,5.2);
\draw[<-,very thick] (1,5.25) -- (1,6);
\draw[black] (1,6.25) node {\scriptsize $P$};
\draw[dashed,->] (-0.5,3.2) -- (1.46,3.4);
\draw[dashed,black,->] (0.8,2.5) -- (0.8,3.4);
\draw[dashed,black,-] (1,3.4) -- (1.45,3.4);
\draw[teal] (0.8,2.2) node {\scriptsize $x$};
\draw[dashed,black,-] (0.8,0.2) -- (0.8,2);
\draw[dashed,black, xshift=0cm] plot [smooth, tension=1] coordinates { (1,0.25) (1.5,2.75) (1,5.25)};
\draw[black, xshift=0cm, line width = 3pt] plot [smooth, tension=1] coordinates {(2,0.5) (2.5,3) (2,5.5)};
\draw[black, xshift=0cm, line width = 3pt] plot [smooth, tension=1] coordinates {(0,0) (0.5,2.5) (0,5)};
\draw[<-] (1.65,3.4) arc (90:-50:0.34cm);
\draw[teal] (2,2.5) node {\scriptsize $M(x)$};
\draw[teal] (1.85,1.5) node {\scriptsize $y(x)$};
\draw[teal] (-0.3,3.6) node {\scriptsize $\rho(x)$};
\draw[teal] (-0.6,1.6) node {\scriptsize $C(x)$};
\draw[dashed,thick,-] (1.45,3.3) arc (360: -40:2cm);
\draw[->] (1,0.25) -- (3,0.25);
\draw[black] (3.3,0.2) node {\scriptsize $y$};
\draw[black] (2.9,3) node {\scriptsize $l$};
\end{tikzpicture}
\caption{The elastic curve $y(\cdot)$ and the osculating circle $C(\cdot)$.} \label{FigB}
\end{figure}
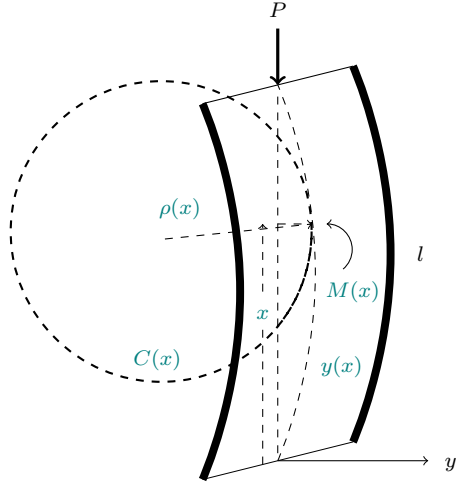 

Assuming a scheme like the one presented in Figure \ref{FigB}, we know (see \cite{TG}) that the external moment is given by, $M(x)=-P \, y(x)$. To introduce the fractional component, $\alpha\in (0,1]$, in the study of beam deflection, we work with the radius of curvature, $\rho (x)$. Indeed, through a thorough analysis utilizing the Caputo fractional derivative on the radius of curvature, as discussed in Section \ref{FRC}, we derive the fractional differential equation
\begin{equation}
EI\left(\frac{\Gamma(2-\alpha)}{x^{1-\alpha}}\cdot D_{0+}^{1+\alpha}y(x) \right)=-P \, y(x), \ \ x\in (0,l). \label{Cde}
\end{equation}

Currently, there is a wide variety of fractional derivatives available. Among the classical ones are the Riemann-Liouville, Grunwald-Letnikov, Riesz and Marchaud derivatives, to name a few, see \cite{diethelm2010analysis}, \citep{KST} or \cite{SKM}. In our study, we have opted for the Caputo derivative, primarily because it yields zero when applied to constant functions, and also because the order of the derivative is integer in the initial conditions of the differential equations we will be considering (see (\ref{fDEq2})).

When modeling physical phenomena using fractional differential equations, it is common to replace the classical derivative with a fractional one, as explained in \cite{shahroudi2023dynamic,lu2019analysis,cao2021numerical,liaskos2020implicit}. The main reason for this lies in the absence, within the fractional context, of an increment concept that would allow the deduction of the differential equation in fractional terms, see \cite{KALAK,fractalfract6110626}. In our study, as indicated in (\ref{apxlarc}), we use a linear approximation of the arc $S(x)$,  corresponding to the curvature radius $\rho (x)$, to obtain a fractional model of the buckling of a column.

The critical buckling force is given by
\begin{equation}
P=EI\,\Gamma(2-\alpha) \cdot \frac{s_{\bf 0}}{l^{2}}, \label{fcpandeo}
\end{equation}
where $s_{\bf 0}$ is the first positive zero of $y(x)$, the solution of (\ref{Cde}). In particular, if $\alpha=1$ then $s_{\bf 0}= \pi^{2}$, which coincides with the classic case, see \cite{TG}. Clearly, the main complexity lies in the calculation of $s_{\mathbf{0}}$. According to our literature review, there are no precedents of studies that have identified positive roots for solutions of fractional differential equations in the Caputo context. Beyond its varied practical applications, we believe our work introduces new techniques that could be valuable for advancing other disciplines of knowledge. In the study of solutions to ordinary differential equations solved using power series, for example

To determine $s_{\bf 0}$, or more precisely, to find an approximation of $s_{\bf 0}$, we note that the function $y(x)$ is the uniform limit, over compact intervals, of certain polynomials. Assuming there is a polynomial $p_{n_{0}}$ that meets specific conditions (see Assumption A), we demonstrate that $y(x)$ has a positive root, as detailed in Theorem \ref{Th1}. Furthermore, if we impose additional regularity conditions on $p_{n_{0}}$ (see Assumption B), then for a given error $\varepsilon$, we can obtain an approximation of $s_{\bf 0}$ that differs by less than $\varepsilon$ units, as outlined in Theorem \ref{Th2}. Thus, we manage to obtain an approximation of the critical force $P$, as seen in equation (\ref{fcpandeo}).

Our numerical experiments suggest that $s_{\bf 0}$ does not exist for all values of the index $\alpha \in (0,1]$. Specifically, we observe that for $\alpha \leq 0.526$, the function $y(x)$ lacks positive roots, whereas the opposite occurs for $\alpha \geq 0.527$. However, this is not a limitation for practical propoues, as for values of $\alpha$ relatively distant from $1$, that is, closer to $0$, the graph of $y(x)$ loses the symmetric appearance it exhibits in Figure \ref{FigB}. Furthermore, for values of $\alpha$ near $1$, it is advisable to consider function 
$$w(x)=\frac{y(x)+y(l-x)}{2}, \quad  x \in [0,l],$$
to achieve a symmetric figure.

The paper is structured as follows. Firstly, in Section \ref{FRC}, we derive the radius of curvature involving the fractional index $\alpha \in (0,1]$. In Section \ref{SecAEx}, Theorem \ref{Th1} presents the solution $y(x)$ of equation (\ref{Cde}). Assuming that $y(x)$ can be approximated by a polynomial, we establish in Theorem \ref{Th2} that $y(x)$ has a positive root. Additionally, Theorem \ref{Th3} provides a method for approximating such a root. In Section \ref{SecNE}, we demonstrate that for $\alpha = 0.526$, the function $y(x)$ does not have a positive root, whereas for $\alpha = 0.527$, it does. In Section \ref{SecCon} we conclude with some final comments.

\section{A fractional approximation of the radius of curvature} \label{FRC}

Let us start by remembering the definition of the Caputo fractional derivative, for $\beta\in (0,\infty) \backslash \mathbb{N}$. The left-side Caputo fractional derivative of a smooth function $y:[0,l]\rightarrow \mathbb{R}$ is defined as
\begin{equation*}
(D_{0+}^{\beta} y)(x) = \frac{1}{\Gamma (n-\beta)}\int_{0}^{x}\frac{y^{(n)}(t)}{(x-t)^{\beta-n+1}}dt,
\end{equation*}
where $n=[\beta]+1$ and $[\beta]$ denotes the integer part of $\beta$. It is known (see \cite{SKM} or \cite{KST}) that Caputo's fractional derivative is linear and satisfies
\begin{equation}
(D_{0+}^{\beta}t^{p})(x) = 
\begin{cases}
0, & p=0,...,n-1,\\
\frac{\Gamma(p+1)}{\Gamma (p+1-\beta)} x^{p-\beta}, & p>n-1.
\end{cases} \label{derpot}
\end{equation}

Let us assume the function $y:[0,l] \rightarrow \mathbb{R}$ has second derivative. Give a point $x\in (0,l)$, we know that the ratio of curvature, $\rho(x)$, of the osculating circle, $C(x)$, at $(x,y(x))$ is give by
\begin{equation}
\rho(x)=\frac{(1+(y^{\prime}(x))^{2})^{3/2}}{y^{\prime\prime}(x)}. \label{rccl}
\end{equation}
If the slope $y^{\prime}(x)$ is small enough that its square can be disregarded compared to $1$, then we have
\begin{equation}
\rho(x)\approx \frac{1}{y^{\prime\prime}(x)}.\label{aprccl}
\end{equation}

The traditional derivation of the radius of curvature relies on differentials, a concept not directly translatable to the fractional realm. To approximate (\ref{aprccl}) in this context, we adopt the following approach. Let $\theta(x)\in (-\pi,\pi)$ such that $y'(x)=\tan \theta(x)$. Then, the equation
$$L(z)=y(x)+y^{\prime}(x)(z-x), \ \ z\in (0,l),$$
describes the tangent line to $y$ at the point $P=(x, y(x))$. Now, consider a point $z$ close to $x$, and suppose, for example, $z < x$. Denote by $\theta(z)$ the angle between the segments associated with $z$ and $x$, and denote by $T(z)$ the tangent line at $(z, C(z))$ on the osculating circle (see Figure \ref{FigSe}).
\begin{figure}[ht]
\centering

\begin{tikzpicture}[scale=1]

\draw[->] (-2,1.5) -- (8.8,1.5);
\draw[->] (-1,3) -- (-1,-1.2);
\draw[thick,black, xshift=0cm] plot [smooth, tension=1] coordinates { (0.5,1.5) (4.7,0) (8,-0.2)};
\draw[red,-] (-1.5,1.5) -- (7.5,-0.65);
\draw[dashed,red] ([shift=(30:1cm)]1,1.2) arc (220:300:5cm);
\draw[teal] (8.7,0.9) node[anchor=north] {\footnotesize $C(x)$};
\draw (4.7,2) node[anchor=north] {\footnotesize $x$};
\draw[dashed] (4.7,1.5) -- (4.7,0.0);
\draw (4.7,0.2) node[anchor=north] {\tiny $\bullet$};
\draw[dashed,-] (4.7,0.0) -- (5.5,4);
\draw[teal] (5.9,3.5) node[anchor=north] {\scriptsize $\rho(x)$};
\draw[dashed,-] (5.5,4) -- (3.2,0.6);
\draw[cyan,-] (-0.5,2.65) -- (7,-1.56);
\draw (3.19,0.78) node[anchor=north] {\tiny $\bullet$};
\draw (3.19,2) node[anchor=north] {\footnotesize $z$};
\draw[dashed] (3.19,1.5) -- (3.19,0.6);
\draw[black] (8.4,-0.2) node {\scriptsize $y(x)$};
\draw[red] (8,-0.35) node[anchor=north] {\scriptsize $L(z)$};
\draw[thick,black] ([shift=(30:1cm)]5,-1.42) arc (-75:32:0.4cm);
\draw[black] (6.7,-1) node {\scriptsize $\theta(z)$};
\draw[thick,black] ([shift=(24:1cm)]3.85,2.5) arc (200:300:0.35cm);
\draw[black] (4.85,2.4) node {\scriptsize $\theta(z)$};
\draw (3.19,0.57) node[anchor=north] {\tiny $\bullet$};
\draw[black] (-1,-1.5) node {\scriptsize $y$};
\draw[black] (0.5,1.8) node {\scriptsize $0$};
\draw[cyan] (7.5,-1.5) node {\scriptsize $T(z)$};
\draw[black] (3,0.1) node {\scriptsize $P_{L}$};
\draw[black] (3.5,0.65) node {\scriptsize $P_{C}$};
\draw[black] (5,0.2) node {\scriptsize $P$};
\end{tikzpicture}
\caption{The osculating circle $C(x)$.} \label{FigSe}
\end{figure}
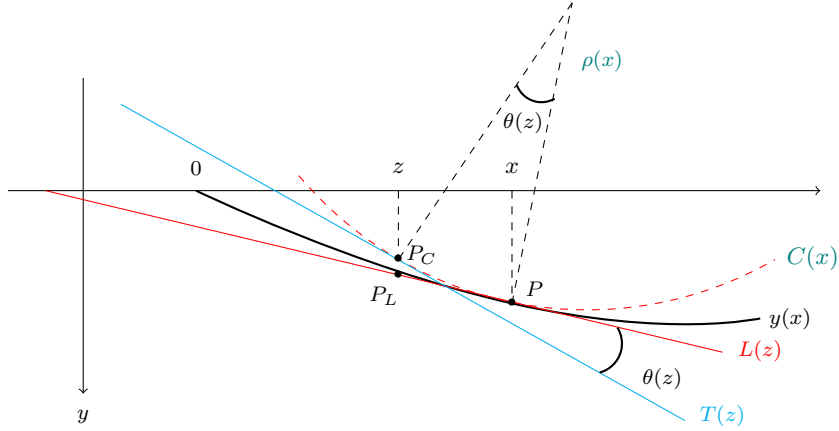

Let us denote by $S(z)$ the length of the arc corresponding to a circle with angle $\theta(z)$ and radius $\rho(x)$, then 
\begin{equation}
S(z)=\rho(x)\theta(z), \ \ z\in (0,l). \label{fpan}
\end{equation}
We adhere to the convention of considering the angle positive if it is measured in the clockwise direction. If $z$ approaches $x$ from the left, then $\theta(z)$ is negative. Given that $z$ is close to $x$, the points $P_{C}=(z, C(z))$ and $P_{L}=(z, L(z))$ are nearly coincident. Consequently, the magnitude of $S(z)$ is approximately the length of the segment from $P_{L}=(z, L(z))$ to $P=(x, y(x))$:
\begin{eqnarray}
S(z)&\approx&
\begin{cases}
-\sqrt{(z-x)^{2}+(L(z)-y(x))^{2}}, & 0<z<x,\\
\sqrt{(z-x)^{2}+(L(z)-y(x))^{2}}, & x<z<l.
\end{cases} \nonumber\\
&=&\sqrt{1+(y^{\prime}(x))^{2}}\cdot (z-x). \label{apxlarc}
\end{eqnarray}

Utilizing the linearity property of Caputo's fractional derivative and referencing equation (\ref{derpot}), with $0 < \alpha < 1$, $\beta = 1$, and $n = 1$, we derive
\begin{eqnarray}
D^{\alpha}_{0+}S(z)&\approx&\sqrt{1+(y^{\prime}(x))^{2}}\ D_{0+}^{\alpha} (z-x) \nonumber\\
&=&\frac{\sqrt{1+(y^{\prime}(x))^{2}}}{\Gamma(2-\alpha)} \cdot z^{1-\alpha}. \label{deriS}
\end{eqnarray}
Now, let us focus on the right-hand side of (\ref{fpan}), specifically addressing the function $\theta(z)$. Let $\theta(x)=\tan^{-1}(y^{\prime}(x))$ be the angle of the line $L$. Drawing a line parallel to the $x$-axis that passes through the point $I$, where the lines $L$ and $T$ intersect, we see that the angle of line $T$ is $\theta(x)+\theta(z)$. Let $R$ be the tangent line at the point $P_{z}=(z,y(z))$,
$$R(w)=y(z)+y^{\prime}(z)(w-z), \quad w\in (0,l).$$
Since $z\approx x$ and because the elastic curve $y(\cdot)$ is a smooth function, then $P_{z} \approx P_{C}$. Moreover, the angle of line $T$ is approximately equal to the angle of line $R$ (see Figure \ref{FigTL}):
\begin{equation}
\theta(x)+\theta(z) \approx \tan^{-1} (y^{\prime}(z)). \label{apxan}
\end{equation}
Since $z\approx x$, then $\theta(z)\approx 0$. We know that for small angles $x$, given in radians, $x \approx \tan x$. Using this and (\ref{apxan}) we deduce that
\begin{equation}
\theta(x)+\theta(z) \approx \tan (\theta(x)+\theta(z))\approx y^{\prime}(z). \label{apxptg}
\end{equation}
\begin{figure}[ht]
\centering

\begin{tikzpicture}[scale=1]

\draw[->] (-2,1.5) -- (8.8,1.5);
\draw[->] (-1,3) -- (-1,-1.2);
\draw[black, xshift=0cm, thick] plot [smooth, tension=1] coordinates { (0.5,1.5) (4.7,0) (8,-0.2)};
\draw[red,-] (-1.5,1.5) -- (7.5,-0.67);
\draw (4.7,2) node[anchor=north] {\footnotesize $x$};
\draw[dashed] (4.7,1.5) -- (4.7,0.0);
\draw (4.7,0.2) node[anchor=north] {\tiny $\bullet$};
\draw[cyan,-] (-0.5,2.65) -- (7,-1.56);
\draw (3.19,0.78) node[anchor=north] {\tiny $\bullet$};
\draw (3.19,2) node[anchor=north] {\footnotesize $z$};
\draw[dashed] (3.19,1.5) -- (3.19,0.6);
\draw[black] (8.4,-0.2) node {\scriptsize $y(x)$};
\draw[red] (8,-0.35) node[anchor=north] {\scriptsize $L(z)$};
\draw[thick,cyan] ([shift=(10:1cm)]1.78,0.635) arc (-55:35:0.5cm);
\draw[] (2.85,2.3) node {\scriptsize ${\color{red}\theta(x)}+{\color{cyan}\theta(z)}$};
\draw[thick,red] ([shift=(30:1cm)]-1.35,0.75) arc (-40:40:0.2cm);
\draw[red] (-0.3,0.7) node {\scriptsize $\theta(x)$};
\draw[thick,black] ([shift=(30:1cm)]5,-1.425) arc (-75:32:0.4cm);
\draw[black] (6.7,-1) node {\scriptsize $\theta(z)$};
\draw[blue,-] (-1.5,1.95) -- (7.5,-0.97);
\draw (3.19,0.62) node[anchor=north] {\tiny $\bullet$};
\draw[thick,blue] ([shift=(10:1cm)]0.2,0.91) arc (-40:29:0.37cm);
\draw[blue] (1.35,0.45) node {\scriptsize $\tan^{-1}(y^{\prime}(z))$};
\draw[black] (-1,-1.5) node {\scriptsize $y$};
\draw[cyan] (7.5,-1.5) node {\scriptsize $T(z)$};
\draw[black] (0.5,1.8) node {\scriptsize $0$};
\draw[blue] (8,-1.1) node {\scriptsize $R(w)$};
\draw[black] (3.2,0.0) node {\scriptsize $P_{z}$};
\draw[black] (3.5,0.65) node {\scriptsize $P_{C}$};
\draw[black] (3.8,-0.1) node {\scriptsize $I$};
\draw (3.85,0.4) node[anchor=north] {\tiny $\bullet$};
\draw[dashed] (-1.5,0.22) -- (8,0.22);
\end{tikzpicture}
\caption{The tangent lines. In blue is the tangent line at $(z,y(z))$.} \label{FigTL}
\end{figure}
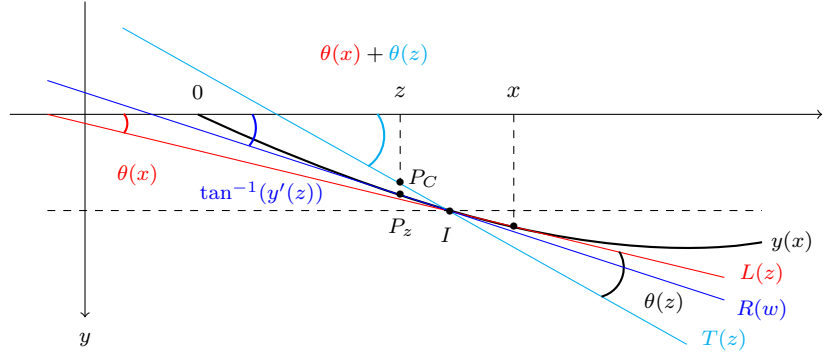
\noindent The observation that the fractional derivative of constant functions is zero, as indicated in equation (\ref{derpot}), results in the following equation
\begin{equation}
D^{\alpha}_{0+}\theta(z)=D^{\alpha}_{0+}(\theta(x)+\theta(z))\approx D^{\alpha}_{0+} y^{\prime}(z)=D^{\alpha+1}_{0+}y(z). \label{ddetheta}
\end{equation}

From (\ref{fpan}), we derive the following relationship
$$D^{\alpha}_{0+}S(z)|_{z=x} = \rho(x)D^{\alpha}_{0+}\theta(z)|_{z=x},$$
and by applying (\ref{deriS}) and (\ref{ddetheta}), we can deduce an analogue of (\ref{rccl})
$$\rho (x)= \frac{x^{1-\alpha}}{\Gamma(2-\alpha)} \cdot\frac{(1+(y^{\prime}(x))^{2})^{1/2}}{D^{\alpha+1}_{0+}y(x)}.$$
As in the previous case, if the square of $y^{\prime}(x)$ is significantly smaller than $1$, we arrive at the approximation
\begin{equation}
\rho(x) \approx \frac{x^{1-\alpha}}{\Gamma(2-\alpha)} \cdot \frac{1}{D^{\alpha+1}_{0+}y(x)}. \label{fraprc}
\end{equation}
This is similar to (\ref{aprccl}), but with the additional factor of $x^{1-\alpha}/\Gamma(2-\alpha)$.

\section{Solving the fractional differential equation}\label{SecAEx}

For $\alpha \in (0,1]$, we define the parameter $\lambda$ as follows
\begin{eqnarray*}
\lambda := \frac{P}{EI\,\Gamma(2-\alpha)},
\end{eqnarray*}
where $E$, $I$ and $P$ are positive constants. Our first aim is to solve the fractional differential equation
\begin{equation}
D_{0+}^{1+\alpha}y\left(x\right)=-\lambda x^{1-\alpha} \, y\left(x\right),\label{fDEq1}
\end{equation}
subject to the initial conditions
\begin{equation}
y(0)=b_{0}, \quad y^{\prime}(0)=b_{1}\label{fDEq2}.
\end{equation} 

Given the physical constraints of the problem, we have $b_{0}=0$. The initial condition for the derivative, denoted as $b_{1}$, remains undetermined. Identifying $b_{1}$ is crucial for calculating the critical load. We start by seeking the solution to the initial value problem (\ref{fDEq1})-(\ref{fDEq2}).

\begin{theorem} \label{Th1}
Let us set 
\begin{equation}
y\left(x\right) = b_{1} \left(a_{0} \, x+ \sum_{k=1}^{\infty}\left(-1\right)^{k}a_{k}\lambda^{k} x^{2k+1}\right) , \ \ x \in \mathbb{R}, \label{solFDE}
\end{equation}
where
$$a_{0}:=1, \quad a_{k}:=\prod_{i=1}^{k}\frac{\Gamma (2i+1-\alpha)}{\Gamma (2i+2)}, \ \ k\in \mathbb{N}.$$
The series converges absolutely and represents the solution to the initial value problem as stated in equations (\ref{fDEq1}) and (\ref{fDEq2}).
\end{theorem}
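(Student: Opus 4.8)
The plan is to verify directly that the proposed power series satisfies the fractional differential equation and the initial conditions, and to establish absolute convergence separately. The natural strategy is the classical power-series (Frobenius-type) method adapted to the Caputo setting: substitute an unknown series into the equation, apply the termwise differentiation formula \eqref{derpot}, match coefficients to obtain a recurrence, and solve it in closed form.

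\textbf{Setting up the recurrence.} I would posit a solution of the form $y(x)=\sum_{k=0}^{\infty} c_{k} x^{2k+1}$, motivated by the initial conditions $y(0)=0$ and $y'(0)=b_1$, which force the absence of a constant term and fix $c_0=b_1$. Applying $D_{0+}^{1+\alpha}$ termwise via \eqref{derpot} (with $p=2k+1$, $\beta=1+\alpha$, $n=2$), each monomial $x^{2k+1}$ maps to $\frac{\Gamma(2k+2)}{\Gamma(2k+1-\alpha)}x^{2k-\alpha}$. Multiplying the right-hand side $-\lambda x^{1-\alpha} y(x)$ gives a series whose general term is $-\lambda c_{k} x^{2k+2-\alpha}$. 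Re-indexing so the powers of $x$ align on both sides (the left side at level $k+1$ produces $x^{2(k+1)-\alpha}=x^{2k+2-\alpha}$, matching the right side at level $k$) yields the recurrence
\begin{equation*}
\frac{\Gamma(2k+4)}{\Gamma(2k+3-\alpha)}\,c_{k+1}=-\lambda\,c_{k},\qquad k\ge 0.
\end{equation*}
Solving this telescoping recurrence with $c_0=b_1$ produces exactly $c_{k}=b_1(-1)^{k}\lambda^{k}\prod_{i=1}^{k}\frac{\Gamma(2i+1-\alpha)}{\Gamma(2i+2)}=b_1(-1)^k a_k\lambda^k$, which is the claimed form in \eqref{solFDE}.

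\textbf{Convergence.} The main technical obstacle is justifying that the series converges absolutely and, crucially, that termwise application of the Caputo derivative is legitimate (this requires uniform convergence of the differentiated series on compact subintervals so that differentiation under the integral sign in the definition of $D_{0+}^{1+\alpha}$ is valid). To handle convergence I would estimate the ratio $a_{k+1}/a_k=\frac{\Gamma(2k+3-\alpha)}{\Gamma(2k+4)}$ and invoke the asymptotic $\Gamma(z+s)/\Gamma(z+t)\sim z^{s-t}$ as $z\to\infty$; here $s-t=(3-\alpha)-4=-1-\alpha<0$, so the ratio behaves like $(2k)^{-1-\alpha}\to 0$. Consequently $\lim_{k\to\infty}\bigl|a_{k+1}\lambda x^{2}/a_k\bigr|=0$ for every fixed $x$, and by the ratio test the series converges absolutely on all of $\mathbb{R}$, with uniform convergence on every compact interval. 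The super-geometric decay of the coefficients likewise guarantees the differentiated series converges uniformly on compacta, which legitimizes the termwise fractional differentiation used in setting up the recurrence.

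\textbf{Verification of initial conditions and closing.} Finally I would confirm the initial data: evaluating \eqref{solFDE} at $x=0$ gives $y(0)=0=b_0$, and differentiating termwise (permissible by the uniform convergence just established) gives $y'(0)=b_1 a_0=b_1$, matching \eqref{fDEq2}. Assembling these pieces---the recurrence solution reproducing the stated coefficients, the absolute/uniform convergence validating termwise operations, and the initial-condition check---completes the proof that \eqref{solFDE} is the solution of \eqref{fDEq1}--\eqref{fDEq2}. I expect the recurrence derivation to be routine once the index-matching is done carefully; the genuinely delicate point is the rigorous justification of interchanging $D_{0+}^{1+\alpha}$ with the infinite sum, which rests entirely on the Gamma-ratio asymptotics controlling the tail.
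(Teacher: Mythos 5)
Your overall strategy coincides with the paper's: expand in odd powers of $x$, apply $D_{0+}^{1+\alpha}$ termwise via (\ref{derpot}), match the coefficients of $x^{2k+2-\alpha}$, and get convergence on all of $\mathbb{R}$ from the decay of the Gamma ratios (the paper uses the root test bounded by the ratio $\Gamma(2k+3-\alpha)/\Gamma(2k+4)\to 0$; your ratio-test argument with the asymptotic $\Gamma(z+s)/\Gamma(z+t)\sim z^{s-t}$ is the same estimate). The only structural difference is that you re-derive the coefficients from the recurrence $c_{k+1}\Gamma(2k+4)/\Gamma(2k+3-\alpha)=-\lambda c_{k}$ while the paper verifies the stated coefficients directly via $a_{k}\,\Gamma(2k+2)/\Gamma(2k+1-\alpha)=a_{k-1}$; these are the same computation read in opposite directions. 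Your explicit attention to justifying the interchange of $D_{0+}^{1+\alpha}$ with the infinite sum is a point where you are in fact more careful than the paper, which applies linearity termwise without comment.

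There is, however, one concrete slip. Your blanket claim that ``each monomial $x^{2k+1}$ maps to $\frac{\Gamma(2k+2)}{\Gamma(2k+1-\alpha)}x^{2k-\alpha}$'' is false for $k=0$. With $\beta=1+\alpha$ and $n=2$, the exponent $p=1$ falls in the first case of (\ref{derpot}) (namely $p=0,\dots,n-1$), so $D_{0+}^{1+\alpha}x=0$; the power formula applies only for $p>n-1$, i.e.\ for $k\ge 1$. This is not cosmetic: if the $k=0$ term really contributed $\frac{\Gamma(2)}{\Gamma(1-\alpha)}x^{-\alpha}$ (as it would under the Riemann--Liouville derivative), the left-hand side would contain a singular $x^{-\alpha}$ term with no counterpart in $-\lambda\sum_{k}c_{k}x^{2k+2-\alpha}$, and no choice of coefficients could satisfy (\ref{fDEq1}). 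Your re-indexing, which starts the left side at level $k+1\ge 1$, silently discards this term; the matching works only because the Caputo derivative annihilates $x$. The paper makes this step explicit (``Since $\alpha+1\in(1,2]$, then $D_{0+}^{\alpha+1}x=0$''), and indeed this vanishing is one of the stated reasons the authors chose the Caputo derivative in the first place. With that one line added, your argument is complete and correct.
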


\begin{proof}
By applying the root criterion, we can ascertain that the power series in (\ref{solFDE}) converges absolutely over $\mathbb{R}$. This conclusion is drawn from the observation that
\begin{eqnarray*}
\limsup_{k\rightarrow \infty} \ (|(-1)^{k}a_{k}|)^{1/k}\leq \limsup_{k\rightarrow \infty} \ \frac{|(-1)^{k+1}a_{k+1}|}{|(-1)^{k}a_{k}|} = \limsup_{k\rightarrow \infty} \ \frac{\Gamma(2k+3-\alpha)}{\Gamma(2k+4)}=0.
\end{eqnarray*}
On the other hand, given that $y(0)=0$ and
\begin{equation*}
y^{\prime}\left(x\right)  =  b_{1}\left( 1+ \sum_{k=1}^{\infty} (-1)^{k} a_{k}\lambda^{k}\left(2k+1\right)x^{2k}\right),
\end{equation*}
yield $y^{\prime}(0)=b_{1}$. Now, let us calculate the fractional derivative of $y(x)$,
\begin{eqnarray*}
D_{0+}^{\alpha+1}y\left(x\right) = b_{1} \left( D_{0+}^{\alpha+1}x + \sum_{k=1}^{\infty}\left(-1\right)^{k}a_{k}\lambda^{k}D_{0+}^{\alpha+1}x^{2k+1}\right) .
\end{eqnarray*}
Since $\alpha+1\in\left(1,2\right]$, then $D_{0+}^{\alpha+1}x = 0$ (see (\ref{derpot})). Furthermore, for $k\ge 1$,
\begin{eqnarray*}
D_{0+}^{\alpha+1}x^{2k+1} = \frac{\Gamma(2k+2)}{\Gamma(2k+1-\alpha)}x^{2k-\alpha},
\end{eqnarray*}
from which we derive
\begin{eqnarray*}
D_{0+}^{\alpha+1}y\left(x\right)  & = & b_{1}\sum_{k=1}^{\infty}(-1)^{k}a_{k}\lambda^{k}\frac{\varGamma\left(2k+2\right)}{\varGamma\left(2k+1-\alpha\right)}x^{2k-\alpha}\\ 
 & = & b_{1}\left(-\lambda x^{2-\alpha}+\sum_{k=2}^{\infty}(-1)^{k}a_{k-1}\lambda^{k}x^{2k-\alpha}\right)\\
 & = & -\lambda x^{1-\alpha}b_{1}\left(x+\sum_{k=1}^{\infty}\left(-1\right)^{k}a_{k}\lambda^{k}x^{2k+1}\right)\\
 & = & -\lambda x^{1-\alpha}y(x).
\end{eqnarray*}
This result confirms the desired equality (\ref{fDEq1}).
\end{proof}

For $n \in \mathbb{N} \cup \{\infty\}$ we define
\begin{eqnarray*}
p_{n}(x) =\sum_{k=0}^{n}\left(-1\right)^{k}a_{k}x^{k}, \quad x\in \mathbb{R}.
\end{eqnarray*}
Then the solution $y(x)$ of (\ref{fDEq1}) can be expressed as
$$y(x)= b_{1} xp_{\infty}(\lambda x^{2}), \quad x\in [0,l].$$

We know that $y(l)=0$, then $b_{1}=0$ or $p_{\infty}(\lambda l^{2})=0$. If $b_{1}=0$, the column has not bent at all. On the other hand we have $p_{\infty}(\lambda l^{2})=0$. Thus we need to find the smallest positive root of the function $p_{\infty}$ in order to obtain the critical load (force).

We denote the smallest positive root of the polynomial $p_{n}$ as $r_{n}$. The existence of $r_{n}$ depends on whether $n$ is an odd or even integer. For odd integers $n$, a root $r_{n}$ always exists. This is due to the intermediate value theorem, as a polynomial of odd degree must cross the $x$-axis at least once, ensuring at least one real root. However, for even integers $n$, the existence of such a root is not guaranteed.

\begin{lemma} \label{ldescre}
Let $\alpha$ be in $(0,1]$. The following functions defined on $\mathbb{N}$
\begin{gather*}
f(x) = \frac{\Gamma(x+1)}{\Gamma(x-\alpha)}, \\
g(x) = \frac{x\Gamma(2x+4)}{(x+1)\Gamma(2x+3-\alpha)},
\end{gather*}
are both $f(x)$ and $g(x)$ strictly increasing functions.
\end{lemma}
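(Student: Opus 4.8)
The plan is to exploit that both $f$ and $g$ are defined only on the discrete set $\mathbb{N}$, so that ``strictly increasing'' means precisely $f(x+1)>f(x)$ and $g(x+1)>g(x)$ for every $x\in\mathbb{N}$. Since each function is a quotient of positive quantities, I would establish these inequalities by forming the ratios $f(x+1)/f(x)$ and $g(x+1)/g(x)$ and showing each exceeds $1$. The one algebraic tool needed is the functional equation $\Gamma(z+1)=z\,\Gamma(z)$, which collapses any quotient of Gamma values whose arguments differ by an integer into an elementary rational expression.

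For $f$, applying the recurrence in numerator and denominator gives
\[
\frac{f(x+1)}{f(x)}=\frac{\Gamma(x+2)}{\Gamma(x+1)}\cdot\frac{\Gamma(x-\alpha)}{\Gamma(x+1-\alpha)}=\frac{x+1}{x-\alpha}.
\]
For $x\ge 1$ and $\alpha\in(0,1]$ one has $x-\alpha>0$ (so that $f(x)>0$ and the ratio is meaningful), and this ratio exceeds $1$ exactly because $x+1>x-\alpha$, i.e.\ because $\alpha>-1$. Hence $f(x+1)>f(x)$.

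For $g$, I would factor the ratio into a rational piece and a Gamma piece,
\[
\frac{g(x+1)}{g(x)}=\frac{(x+1)^{2}}{x(x+2)}\cdot\frac{(2x+5)(2x+4)}{(2x+4-\alpha)(2x+3-\alpha)}.
\]
The first factor equals $1+\frac{1}{x(x+2)}>1$. For the second, a termwise comparison suffices: since $\alpha>0$ we have $2x+5>2x+4-\alpha>0$ and $2x+4>2x+3-\alpha>0$, so the numerator dominates the denominator and the factor exceeds $1$. A product of two numbers each larger than $1$ is larger than $1$, giving $g(x+1)>g(x)$.

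I expect the only delicate point to be well-definedness and positivity at the boundary, namely $x=1$, $\alpha=1$, where $x-\alpha=0$ is a pole of $\Gamma$ and the closed-form ratio for $f$ degenerates. There I would invoke the standard convention that $1/\Gamma$ vanishes at the non-positive integers, so that $f(1)=0<f(2)$ holds directly and monotonicity is preserved; for $g$ no such issue arises, since $2x+3-\alpha\ge 4>0$ for all $x\ge 1$. Everything else reduces to routine simplification through the Gamma recurrence.
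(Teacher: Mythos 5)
Your proof is correct and takes essentially the same route as the paper's: both use the recurrence $\Gamma(z+1)=z\,\Gamma(z)$ to reduce $f(x+1)>f(x)$ and $g(x+1)>g(x)$ to elementary rational inequalities, arriving at the same quantities $\frac{x+1}{x-\alpha}$ and $\frac{(x+1)^{2}(2x+5)(2x+4)}{x(x+2)(2x+4-\alpha)(2x+3-\alpha)}$ (the paper phrases this as a chain of inequalities rather than a ratio exceeding $1$, and bounds the Gamma factor by $\frac{2x+5}{2x+3}$ instead of your termwise comparison, but these are cosmetic differences). Your explicit handling of the degenerate point $x=1$, $\alpha=1$, where $\Gamma(x-\alpha)$ has a pole, is a small refinement that the paper's argument silently passes over.
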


\begin{proof}
To verify the monotonicity property, we will use the relation $\Gamma(x+1)=x\Gamma(x)$ for $x\in (0, \infty)$. Applying this, we have
 \begin{eqnarray*}
\frac{\Gamma(x+2)}{\Gamma(x+1-\alpha)}&=&\frac{(x+1)\Gamma(x+1)}{(x-\alpha)\Gamma(x-\alpha)}\\
 &\geq & \frac{x+1}{x} \cdot \frac{\Gamma(x+1)}{\Gamma(x-\alpha)}\\
 &= & \left(1+\frac{1}{x}\right) \cdot \frac{\Gamma(x+1)}{\Gamma(x-\alpha)}\\
 &>& \frac{\Gamma(x+1)}{\Gamma(x-\alpha)}
 \end{eqnarray*}
and
 \begin{eqnarray*}
 \frac{x+1}{x+2}\cdot \frac{\Gamma(2x+6)}{\Gamma(2x+5-\alpha)}&=&\frac{(x+1)^{2}(2x+5)(2x+4)}{x(x+2)(2x+4-\alpha)(2x+3-\alpha)}\cdot \frac{x\Gamma(2x+4)}{(x+1)\Gamma(2x+3-\alpha)}\\
 &\geq & \frac{(x+1)^{2}(2x+5)}{x(x+2)(2x+3)}\cdot \frac{x\Gamma(2x+4)}{(x+1)\Gamma(2x+3-\alpha)}\\ 
  &=&\left(1+\frac{2x+5}{2x^{3}+9x^{2}+10x}\right) \cdot \frac{x\Gamma(2x+4)}{(x+1)\Gamma(2x+3-\alpha)}\\
 &>& \frac{x\Gamma(2x+4)}{(x+1)\Gamma(2x+3-\alpha)}.
 \end{eqnarray*}
Thus, the desired property is clearly established.
\end{proof}

In what follows we require the following hypothesis.

\begin{description}
\item[Assumption A:] Let us suppose there is a polynomial $p_{n_{0}}$, with $n_{0}$ an even integer, such that it has a first positive root $r_{n_{0}}$ that satisfies the inequality
\begin{equation*}
\frac{\Gamma\left(2n_{0}+6\right)}{\Gamma\left(2n_{0}+5-\alpha\right)} \ge r_{n_{0}}.
\end{equation*}
\end{description}

\begin{lemma} \label{despp}
For each $n, m\in \mathbb{N}\cup\{0\}$ we have $p_{n_{0}+2n+1}<p_{\infty}<p_{n_{0}+2m}$ on $(0,r_{n_{0}}]$.
\end{lemma}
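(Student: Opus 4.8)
The plan is to fix $x\in(0,r_{n_0}]$ and to recognize $p_\infty(x)=\sum_{k=0}^\infty(-1)^k a_k x^k$ as an alternating series whose terms $b_k:=a_k x^k>0$ are eventually non-increasing, so that the classical alternating-series bracketing of a limit by its partial sums applies. Since the series converges (Theorem \ref{Th1}), we have $b_k\to0$, and $p_N(x)=\sum_{k=0}^N(-1)^k b_k$ is exactly the $N$-th partial sum. The whole argument reduces to controlling the ratio of consecutive terms on the interval $(0,r_{n_0}]$.

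First I would record the coefficient-ratio identity
\[
\frac{a_k}{a_{k+1}}=\frac{\Gamma(2k+4)}{\Gamma(2k+3-\alpha)}=f(2k+3),
\]
with $f$ the function of Lemma \ref{ldescre}. Because $f$ is strictly increasing, $k\mapsto a_k/a_{k+1}$ is strictly increasing. Noting that Assumption A is precisely the statement $f(2n_0+5)=a_{n_0+1}/a_{n_0+2}\ge r_{n_0}$, I would then obtain, for every $k\ge n_0+1$ and every $x\in(0,r_{n_0}]$,
\[
\frac{a_k}{a_{k+1}}\ \ge\ \frac{a_{n_0+1}}{a_{n_0+2}}\ \ge\ r_{n_0}\ \ge\ x .
\]
Since $b_{k+1}=b_k\,x\,(a_{k+1}/a_k)$, this yields $b_{k+1}\le b_k$ for all $k\ge n_0+1$; moreover the strict monotonicity of $f$ forces $b_{k+1}<b_k$ for $k\ge n_0+2$, so the tail is strictly decreasing from index $n_0+2$ on, even when $x=r_{n_0}$.

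Next I would write the remainder as
\[
p_\infty(x)-p_N(x)=\sum_{k=N+1}^{\infty}(-1)^k b_k=(-1)^{N+1}\sum_{j=0}^{\infty}(-1)^j b_{N+1+j}.
\]
For any index $N\ge n_0$ the inner sum involves only $b_k$ with $k\ge n_0+1$, where the terms are non-increasing, eventually strictly decreasing, and tend to $0$; a standard pairing of consecutive terms then shows the inner sum is strictly positive. Hence $p_\infty(x)-p_N(x)$ has the sign of $(-1)^{N+1}$. Choosing $N=n_0+2m$ (even) gives $p_\infty(x)<p_{n_0+2m}(x)$, and choosing $N=n_0+2n+1$ (odd) gives $p_{n_0+2n+1}(x)<p_\infty(x)$; since $x\in(0,r_{n_0}]$ was arbitrary, this is exactly the asserted chain of inequalities.

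I expect the only genuinely delicate point to be the strictness of these inequalities at the right endpoint $x=r_{n_0}$, where the first tail ratio $a_{n_0+1}/a_{n_0+2}$ may equal $r_{n_0}$ and the corresponding terms fail to decrease strictly. This is exactly where I would lean on the strict (not merely weak) monotonicity of $f$ supplied by Lemma \ref{ldescre}, which guarantees strict decrease of $b_k$ from index $n_0+2$ onward and hence a strictly positive alternating tail. The remaining ingredients—the coefficient identity and the sign of the grouped tail—are routine.
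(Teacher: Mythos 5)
Your proposal is correct and follows essentially the same route as the paper: both arguments group the alternating tail $p_\infty - p_N$ into consecutive pairs and control the term ratios $a_k/a_{k+1} = \Gamma(2k+4)/\Gamma(2k+3-\alpha)$ via the strict monotonicity of $f$ from Lemma \ref{ldescre} together with Assumption A, yielding the sign $(-1)^{N+1}$ for the remainder. Your explicit treatment of the possible equality $a_{n_0+1}/a_{n_0+2} = r_{n_0}$ at the endpoint $x = r_{n_0}$ matches the paper's device of drawing strictness from the pairs of higher index, where $f$ is evaluated at arguments strictly larger than $2n_0+5$.
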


\begin{proof}
Given that the series below converges absolutely, we can rearrange the terms as follows
\begin{eqnarray*}
p_{\infty}(x)-p_{n_{0}+2n+1}(x)& = & \sum_{k=n_{0}+2n+2}^{\infty}(-1)^{k}a_{k}x^{k}\\
&=&\sum_{k=0}^{\infty}a_{n_{0}+2n+2k+2}\, x^{n_{0}+2n+2k+2}-\sum_{k=0}^{\infty}a_{n_{0}+2n+2k+3}\, x^{n_{0}+2n+2k+3}\\
&=& \sum_{k=0}^{\infty}a_{n_{0}+2n+2k+2}\, x^{n_{0}+2n+2k+2} \left(1-\frac{\Gamma(2n_{0}+4n+4k+7-\alpha)}{\Gamma(2n_{0}+4n+4k+8)}x \right).
\end{eqnarray*}
The monotonicity property of $f$, as stated in Lemma \ref{ldescre}, and the condition $x\in (0,r_{n_{0}}]$ imply
$$\frac{\Gamma(2n_{0}+4n+4k+8)}{\Gamma(2n_{0}+4n+4k+7-\alpha)}\geq \frac{\Gamma(2n_{0}+8)}{\Gamma(2n_{0}+7-\alpha)}>\frac{\Gamma(2n_{0}+6)}{\Gamma(2n_{0}+5-\alpha)}\geq r_{n_{0}}\geq x,$$
therefore, $p_{\infty}>p_{n_{0}+2n+1}$ on $(0,r_{n_{0}}]$. 

Now, let us consider
\begin{equation*}
p_{\infty}(x)-p_{n_{0}+2m}(x) = \sum_{k=0}^{\infty}a_{n_{0}+2m+2k+1}\, x^{n_{0}+2m+2k+1} \left(\frac{\Gamma(2n_{0}+4m+4k+5-\alpha)}{\Gamma(2n_{0}+4m+4k+6)}x-1 \right).
\end{equation*}
For $k\geq 1$ and $x\in (0,r_{n_{0}}]$, Lemma \ref{ldescre} implies
$$\frac{\Gamma(2n_{0}+4m+4k+6)}{\Gamma(2n_{0}+4m+4k+5-\alpha)} > \frac{\Gamma(2n_{0}+6)}{\Gamma(2n_{0}+5-\alpha)}\geq r_{n_{0}} \geq x,$$
thus $p_{n_{0}+2m}>p_{\infty}$ on $(0,r_{n_{0}}]$. This completes the proof of the desired inequalities.
\end{proof}

We now turn our attention to a result concerning the monotonicity of the sequence $(p_{n})_{n \in \mathbb{N}}$. This investigation will help us understand how the sequence behaves as $n$ varies, particularly in terms of its increasing or decreasing nature.

\begin{lemma} \label{lpoli}
Let $n$ and $m$ be two positive integers such that $n>m$. If $m$ and $n$ are  even integers, then $p_{n_{0}+n}<p_{n_{0}+m}$ on $(0,r_{n_{0}}]$. Additionally, if $m$ and $n$ are odd integers, then $p_{n_{0}+n}>p_{n_{0}+m}$ on $(0,r_{n_{0}}]$.
\end{lemma}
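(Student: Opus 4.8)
The plan is to mimic the pairing argument from the proof of Lemma \ref{despp}. Writing the difference as the tail of the power series, and using that $n-m$ is even (both indices share the same parity), I obtain
$$p_{n_{0}+n}(x)-p_{n_{0}+m}(x)=\sum_{k=n_{0}+m+1}^{n_{0}+n}(-1)^{k}a_{k}x^{k},$$
a sum of $n-m$ terms, which I group into adjacent pairs
$$(-1)^{K}a_{K}x^{K}+(-1)^{K+1}a_{K+1}x^{K+1}=(-1)^{K}a_{K}x^{K}\left(1-\frac{a_{K+1}}{a_{K}}\,x\right),$$
where $K$ runs through $n_{0}+m+1,\,n_{0}+m+3,\dots,n_{0}+n-1$.

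First I would note that $a_{k}>0$ for all $k$ (every factor $\Gamma(2i+1-\alpha)/\Gamma(2i+2)$ is positive for $\alpha\in(0,1]$), and that $a_{K+1}/a_{K}=\Gamma(2K+3-\alpha)/\Gamma(2K+4)$, whose reciprocal is precisely $f(2K+3)$ for the function $f$ of Lemma \ref{ldescre}. The key estimate is $\frac{a_{K+1}}{a_{K}}\,x<1$ throughout $(0,r_{n_{0}}]$: since $m\ge 1$ the smallest pairing index obeys $K\ge n_{0}+2$, so $2K+3\ge 2n_{0}+7$, and the strict monotonicity of $f$ combined with Assumption A gives
$$\frac{a_{K}}{a_{K+1}}=f(2K+3)\ge f(2n_{0}+7)>f(2n_{0}+5)=\frac{\Gamma(2n_{0}+6)}{\Gamma(2n_{0}+5-\alpha)}\ge r_{n_{0}}\ge x.$$
Hence $1-\frac{a_{K+1}}{a_{K}}x>0$ and $a_{K}x^{K}>0$ for every pair and every $x\in(0,r_{n_{0}}]$, so the sign of each pair is governed entirely by the factor $(-1)^{K}$.

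It then remains to track parities. Because $n_{0}$ is even, in the even case ($m,n$ even) the index $n_{0}+m+1$ is odd, as is every subsequent $K$; thus $(-1)^{K}=-1$, each pair is strictly negative, and summing yields $p_{n_{0}+n}<p_{n_{0}+m}$. In the odd case ($m,n$ odd) the index $n_{0}+m$ is odd, so $K=n_{0}+m+1$ is even, every pair is strictly positive, and therefore $p_{n_{0}+n}>p_{n_{0}+m}$.

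The only delicate point, and the step I expect to require the most care, is ensuring that no pairing index falls below the threshold controlled by Assumption A. This is exactly where the hypothesis that $m$ is a positive integer is needed: it forces $K\ge n_{0}+2$ and hence the strict inequality $f(2K+3)>f(2n_{0}+5)$, whereas allowing $m=0$ would only yield a non-strict bound at $x=r_{n_{0}}$ and would not suffice for the strict monotonicity claimed in the statement.
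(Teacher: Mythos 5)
Your proof is correct and takes essentially the same approach as the paper: both decompose $p_{n_0+n}-p_{n_0+m}$ into consecutive pairs of the tail sum and use the strict monotonicity of $f$ from Lemma \ref{ldescre} together with Assumption A to show each bracketed factor $1-\tfrac{a_{K+1}}{a_{K}}x$ is positive on $(0,r_{n_0}]$, with the parity of the leading index (forced by $n_0$ even) determining the sign. Your uniform treatment via $(-1)^{K}$ is merely a compact rewriting of the paper's two separate case computations.
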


\begin{proof}
If $n$ and $m$ are even integers, with $n>m$, then 
\begin{eqnarray*}
p_{n_{0}+n}(x)-p_{n_{0}+m}(x) & = &\sum_{k=n_{0}+m+1}^{n_{0}+n}(-1)^{k}a_{k}x^{k}\\
&=& \sum_{k=0}^{(n-m-2)/2}a_{2k+n_{0}+m+1} \,x^{2k+n_{0}+m+1} \left(\frac{\Gamma(2n_{0}+2m+4k+5-\alpha)}{\Gamma(2n_{0}+2m+4k+6)}x-1\right).
\end{eqnarray*}
For $x\in (0,r_{n_{0}}]$ we have, from Lemma \ref{ldescre},
$$\frac{\Gamma(2n_{0}+2m+4k+6)}{\Gamma(2n_{0}+2m+4k+5-\alpha)}\geq \frac{\Gamma(2n_{0}+10)}{\Gamma(2n_{0}+9-\alpha)} > \frac{\Gamma(2n_{0}+6)}{\Gamma(2n_{0}+5-\alpha)}\geq r_{n_{0}} \geq x.$$
Therefore $p_{n_{0}+m}>p_{n_{0}+n}$ on $(0,r_{n_{0}}]$. 

Now, let $m$ and $n$ be two odd integers such that $n>m$, then 
\begin{equation*}
p_{n_{0}+n}(x)-p_{n_{0}+m}(x)  =  \sum_{k=0}^{(n-m-2)/2}a_{n_{0}+2k+m+1}\,x^{n_{0}+2k+m+1} \left(1-\frac{\Gamma(2n_{0}+4k+2m+5-\alpha)}{\Gamma(2n_{0}+4k+2m+6)}x \right).
\end{equation*}
Again, by Lemma \ref{ldescre} we have, for $x\in (0,r_{n_{0}}]$,
$$\frac{\Gamma(2n_{0}+4k+2m+6)}{\Gamma(2n_{0}+4k+2m+5-\alpha)}\geq \frac{\Gamma(2n_{0}+10)}{\Gamma(2n_{0}+9-\alpha)} > \frac{\Gamma(2n_{0}+6)}{\Gamma(2n_{0}+5-\alpha)}\geq r_{n_{0}} \geq x,$$
therefore $p_{n_{0}+m}<p_{n_{0}+n}$ on $(0,r_{n_{0}}]$. This proves the result.
\end{proof}

Note that $p_{n_{0}+1}(0)=1$ and
$$p_{n_{0}+1}(r_{n_{0}})=p_{n_{0}}(r_{n_{0}})-a_{n_{0}+1}\, x^{n_{0}+1}<0,$$
since $p_{n_{0}}(r_{n_{0}})=0$. Therefore, by the intermediate value theorem, there exists a root $r_{n_{0}+1}$ such that $0<r_{n_{0}+1}<r_{n_{0}}$. We are now equipped to demonstrate the existence of a positive root for $p_{\infty}$. 

\begin{theorem} \label{Th2}
The function $p_{\infty}$ has a first positive root, and this root is located in $(r_{n_{0}+1},r_{n_{0}})$.
\end{theorem}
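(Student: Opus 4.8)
The plan is to sandwich $p_{\infty}$ between the two polynomials $p_{n_{0}}$ and $p_{n_{0}+1}$ using Lemma~\ref{despp}, read off the sign of $p_{\infty}$ at the endpoints $r_{n_{0}+1}$ and $r_{n_{0}}$, and then apply the intermediate value theorem. First I would record the two relevant instances of Lemma~\ref{despp}: taking $m=0$ gives $p_{\infty}<p_{n_{0}}$ on $(0,r_{n_{0}}]$, while taking $n=0$ gives $p_{n_{0}+1}<p_{\infty}$ on $(0,r_{n_{0}}]$. Evaluating the first inequality at the endpoint $x=r_{n_{0}}$, which is admissible since the interval is closed on the right, yields $p_{\infty}(r_{n_{0}})<p_{n_{0}}(r_{n_{0}})=0$. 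For the lower endpoint, recall (from the discussion just before the statement) that $r_{n_{0}+1}$ is the smallest positive root of $p_{n_{0}+1}$ and lies in $(0,r_{n_{0}})$; since $p_{n_{0}+1}(0)=1$ and $p_{n_{0}+1}$ has no zero before $r_{n_{0}+1}$, we have $p_{n_{0}+1}>0$ on $(0,r_{n_{0}+1})$. Combining this with $p_{n_{0}+1}<p_{\infty}$ gives $p_{\infty}>0$ on $(0,r_{n_{0}+1})$, and at $r_{n_{0}+1}$ itself the lemma gives $p_{\infty}(r_{n_{0}+1})>p_{n_{0}+1}(r_{n_{0}+1})=0$. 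Thus $p_{\infty}>0$ on the whole interval $(0,r_{n_{0}+1}]$, whereas $p_{\infty}(r_{n_{0}})<0$.

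Since the defining power series converges absolutely on $\mathbb{R}$ by Theorem~\ref{Th1}, the function $p_{\infty}$ is continuous. Applying the intermediate value theorem on $[r_{n_{0}+1},r_{n_{0}}]$ with $p_{\infty}(r_{n_{0}+1})>0>p_{\infty}(r_{n_{0}})$ produces at least one zero of $p_{\infty}$ in the open interval $(r_{n_{0}+1},r_{n_{0}})$. This establishes existence of a positive root.

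It then remains to promote ``a root in the interval'' to ``the first positive root lies in the interval.'' For this I would consider the set $Z=\{x>0:p_{\infty}(x)=0\}$, which is nonempty by the previous step and relatively closed in $(0,\infty)$ by continuity. The sign analysis shows $p_{\infty}>0$ on $(0,r_{n_{0}+1}]$, so every element of $Z$ exceeds $r_{n_{0}+1}$; hence $\inf Z$ is bounded below by the positive number $r_{n_{0}+1}$, and taking a sequence in $Z$ converging to $\inf Z$ and passing to the limit shows $\inf Z$ is attained and is itself a zero, i.e.\ the smallest positive root $x_{0}$. Because $p_{\infty}(r_{n_{0}+1})>0$ with $p_{\infty}$ continuous, we get $x_{0}>r_{n_{0}+1}$, and since $x_{0}$ cannot exceed the zero already located by the intermediate value theorem, $x_{0}<r_{n_{0}}$. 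Therefore $x_{0}\in(r_{n_{0}+1},r_{n_{0}})$, as claimed.

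The conceptually delicate point is this final upgrade from existence to \emph{first} root: the strict positivity of $p_{\infty}$ on the right-closed interval $(0,r_{n_{0}+1}]$ is exactly what excludes any earlier zero and pushes $\inf Z$ strictly above $r_{n_{0}+1}$. The rest is routine, so the only care needed is bookkeeping—checking that each application of Lemma~\ref{despp} occurs at points of $(0,r_{n_{0}}]$, namely at $r_{n_{0}+1}<r_{n_{0}}$ and at the endpoint $r_{n_{0}}$.
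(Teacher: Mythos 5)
Your proof is correct and takes essentially the same route as the paper's: both use Lemma~\ref{despp} to sandwich $p_{\infty}$ between $p_{n_{0}+1}$ and $p_{n_{0}}$ on $(0,r_{n_{0}}]$, read off the signs at the two endpoints, and conclude by the intermediate value theorem. If anything, yours is slightly more careful on a point the paper glosses over---you explicitly justify that a \emph{first} positive root exists via the closed zero set bounded away from $0$---whereas the paper simply names $s_{\mathbf{0}}$ and then devotes the rest of its proof to the convergence $r_{n_{0}+2n+1}\to s_{\mathbf{0}}$, which goes beyond the statement and is only needed later for Theorem~\ref{Th3}.
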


\begin{proof}
By taking $m=0$ in Lemma \ref{despp}, we deduce that $p_{\infty}(r_{n_{0}})<p_{n_{0}}(r_{n_{0}})=0$. Given that $p_{\infty}(0)=1$,  the intermediate value theorem guarantees that $p_{\infty}$ must have a root in the interval $(0,r_{n_{0}})$. Let us denote by 
$s_{\bf 0}$ the first positive root of $p_{\infty}$. 

For each $n\in \mathbb{N} \cup \{0\}$, Lemma \ref{despp} implies that $p_{n_{0}+2n+1}(s_{\bf 0})<p_{\infty}(s_{\bf 0})=0$. Therefore, $p_{n_{0}+2n+1}$ has a root $r_{n_{0}+2n+1}$, and it follows that $r_{n_{0}+2n+1}<s_{\bf 0}$. Consequently, we obtain a sequence $(r_{n_{0}+2n+1})_{n\in \mathbb{N} \cup \{0\}}$. This sequence is increasing; indeed, Lemma \ref{lpoli} indicates that  $p_{n_{0}+2n+1}(r_{n_{0}+2n+3})<p_{n_{0}+2n+3}(r_{n_{0}+2n+3})=0$, implying that $r_{n_{0}+2n+1}<r_{n_{0}+2n+3}$. Let us define
$$r:=\lim_{n\rightarrow \infty} r_{n_{0}+2n+1}=\sup_{n\in \mathbb{N} \cup \{0\}}\{r_{n_{0}+2n+1}\}\leq s_{\bf 0}.$$
We aim to show that $r=s_{\bf 0}$. To do this, suppose that $r<s_{\bf 0}$. Take an $\tilde{r}\in (r,s_{\bf 0})$. By the definition of $s_{\bf 0}$ we have $p_{\infty}(x)>0$ for $x\in [r_{n_{0}+1}, \tilde{r}]$. The continuity of $p_{\infty}$ implies 
$$\min_{z\in [r_{n_{0}+1},\tilde{r}]}\{p_{\infty}(z)\}>0.$$
By Weierstrass's M-test, the sequence $(p_{n})_{n\in \mathbb{N}}$ converges uniformly over compact intervals. Hence, there exists $n_{1}\in \mathbb{N}$ such that for all $n\geq n_{1}$,
$$|p_{\infty}(x)-p_{n}(x)|<\frac{1}{2} \min_{z\in [r_{n_{0}+1},\tilde{r}]}\{p_{\infty}(z)\}, \quad x\in  [r_{n_{0}+1},\tilde{r}].$$
In particular, we can choose a sufficiently large odd integer $n$ such that $r_{n}\in (r_{n_{0}+1},r)$. The preceding inequality then leads to
$$0=p_{n}(r_{n})>\frac{1}{2} \, p_{\infty}(r_{n})>0.$$
This contradiction implies the desired equality, $r=s_{\bf 0}$.
\end{proof}

We now propose a method to approximate the first positive root of $p_{\infty}$. To this end, we introduce the following hypothesis.

\begin{description}
\item[Assumption B:] 
We assume that $p^{\prime}_{n_{0}}(x)\leq 0$ for $x$ on $[r_{n_{0}+1},r_{n_{0}}]$ and that
$$\frac{n_{0}+1}{n_{0}+2}\cdot \frac{\Gamma(2n_{0}+6)}{\Gamma(2n_{0}+5-\alpha)}\geq r_{n_{0}}.$$
\end{description}

\begin{theorem}  \label{Th3}
If $n\geq n_{0}$, regardless of whether $n$ is even or odd, the existence of the first root, $r_{n}$, of $p_{n}$ is guaranteed. Furthermore, the sequence $(r_{n})_{n\in \mathbb{N}}$ satisfies the condition
\begin{equation}
\lim_{n \rightarrow \infty}(r_{n+1}-r_{n})=0. \label{elim}
\end{equation}
Given an $\varepsilon>0$ we can find an integer $\tilde{n}$ (existence ensured by the first part of this theorem) such that
\begin{equation}
\left|r_{\tilde{n}}-r_{\tilde{n}+1}\right|<\varepsilon \label{apxdif}
\end{equation}
then 
\begin{equation}
\left|s_{\bf 0}-r_{\tilde{n}}\right|<\varepsilon. \label{aproot}
\end{equation}
Consequently, this provides an approximation to the first positive root, $s_{\bf 0}$, of $p_{\infty}$ with an error margin of at most $\varepsilon$ units.
\end{theorem}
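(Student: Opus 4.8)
The plan is to reduce all three assertions to a single structural fact: the odd-indexed first roots increase to $s_{\bf 0}$ from below, while the even-indexed first roots decrease to $s_{\bf 0}$ from above. Granting this, the two limit statements are immediate, since the whole sequence $(r_n)_{n\ge n_0}$ then converges to $s_{\bf 0}$ and hence $r_{n+1}-r_n\to 0$; and the final error estimate follows at once because, for any $\tilde n\ge n_0$, the indices $\tilde n$ and $\tilde n+1$ have opposite parity (and $n_0$ is even), so exactly one of $r_{\tilde n},r_{\tilde n+1}$ lies below $s_{\bf 0}$ and the other above it. Thus $s_{\bf 0}$ is trapped strictly between them and $|s_{\bf 0}-r_{\tilde n}|<|r_{\tilde n+1}-r_{\tilde n}|<\varepsilon$.

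First I would settle existence and the two monotone subsequences. For even $n=n_0+2m$, Lemma \ref{lpoli} gives $p_{n_0+2m}(r_{n_0})<p_{n_0}(r_{n_0})=0$ while $p_{n_0+2m}(0)=1>0$, so the intermediate value theorem produces a root in $(0,r_{n_0})$; applying the same lemma at $x=r_{n_0+2m}$ yields $r_{n_0+2m+2}<r_{n_0+2m}$, so the even subsequence is decreasing. Moreover Lemma \ref{despp} gives $p_{n_0+2m}>p_\infty>0$ on $(0,s_{\bf 0})$, so $p_{n_0+2m}$ has no root there and therefore $r_{n_0+2m}>s_{\bf 0}$; the even subsequence is thus bounded below by $s_{\bf 0}$. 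The odd case was already handled inside the proof of Theorem \ref{Th2}: $r_{n_0+2m+1}<s_{\bf 0}$, the odd subsequence increases, and its limit is exactly $s_{\bf 0}$.

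The heart of the matter is to show the even subsequence also converges to $s_{\bf 0}$. Put $R:=\lim_m r_{n_0+2m}\ge s_{\bf 0}$ and suppose, for contradiction, $R>s_{\bf 0}$. The key claim is that $p_\infty$ is non-increasing on $[r_{n_0+1},r_{n_0}]$, and this is where Assumption B enters. Differentiating term by term (legitimate since the series is entire) and pairing consecutive terms, one computes on this interval
\begin{equation*}
p_{n_0+2m+2}'(x)-p_{n_0+2m}'(x)=a_{n_0+2m+1}(n_0+2m+1)\,x^{n_0+2m}\left(\frac{x}{g(n_0+2m+1)}-1\right),
\end{equation*}
where $g$ is the increasing function of Lemma \ref{ldescre}. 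Since $g(n_0+2m+1)\ge g(n_0+1)\ge r_{n_0}\ge x$ by monotonicity of $g$ and the second inequality of Assumption B (which is precisely $g(n_0+1)\ge r_{n_0}$), the bracket is $\le 0$, so $p_{n_0+2m+2}'\le p_{n_0+2m}'$ on $[r_{n_0+1},r_{n_0}]$. Starting from the base case $p_{n_0}'\le 0$ (first part of Assumption B), induction gives $p_{n_0+2m}'\le 0$ for all $m$, and letting $m\to\infty$ yields $p_\infty'\le 0$ there. Consequently $p_\infty(x)\le p_\infty(s_{\bf 0})=0$ for $x\in[s_{\bf 0},R]$. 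On the other hand, for $x\in(s_{\bf 0},R)$ we have $x<r_{n_0+2m}$ for every $m$, hence $p_{n_0+2m}(x)>0$ and, in the limit, $p_\infty(x)\ge 0$. Thus $p_\infty\equiv 0$ on the interval $(s_{\bf 0},R)$, which is impossible for the real-analytic function $p_\infty$ with $p_\infty(0)=1$. Therefore $R=s_{\bf 0}$, completing the monotone-convergence picture and, with the first paragraph, the theorem.

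The step I expect to be the genuine obstacle is the derivative estimate $p_\infty'\le 0$ on $[r_{n_0+1},r_{n_0}]$: the partial-sum derivatives do not converge monotonically in $n$, since consecutive added terms alternate in sign. One must therefore isolate the even-indexed derivatives, verify that their pairwise differences are governed exactly by the threshold $g(n_0+2m+1)$, and then combine both halves of Assumption B with the monotonicity of $g$ from Lemma \ref{ldescre}. Everything else—existence of the roots, the two monotone subsequences, and the final straddling estimate—is routine once this derivative control is in place.
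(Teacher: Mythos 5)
Your proof is correct, and up to the final contradiction it follows the same route as the paper: existence and monotonicity of the even-indexed roots from Lemma \ref{lpoli}, the lower bound $r_{n_{0}+2m}>s_{\bf 0}$ from Lemma \ref{despp}, the odd-indexed side imported from the proof of Theorem \ref{Th2}, and derivative control on $[r_{n_{0}+1},r_{n_{0}}]$ obtained by pairing consecutive terms and invoking both halves of Assumption B together with the monotonicity of $g$ from Lemma \ref{ldescre}. Indeed, your telescoping inequality $p_{n_{0}+2m+2}^{\prime}\leq p_{n_{0}+2m}^{\prime}$ is term-for-term the same computation the paper performs in one stroke when it writes $p_{\infty}^{\prime}-p_{n_{0}}^{\prime}$ as a single series of paired terms, and your identification of the second half of Assumption B as exactly $g(n_{0}+1)\geq r_{n_{0}}$ is the same observation the paper uses. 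Where you genuinely diverge is the endgame. The paper keeps the \emph{strict} inequality $p_{\infty}^{\prime}(s_{\bf 0})<p_{n_{0}}^{\prime}(s_{\bf 0})\leq 0$, uses continuity of $p_{\infty}^{\prime}$ to produce $\tilde{x}>s_{\bf 0}$ with $p_{\infty}(\tilde{x})<0$, and then approximates $p_{\infty}(\tilde{x})$ by an even partial sum to manufacture an even-indexed root below $r$, contradicting $r\leq r_{n}$ for even $n$. You instead settle for the weak inequality $p_{\infty}^{\prime}\leq 0$ on the whole interval, deduce $p_{\infty}\leq 0$ on $[s_{\bf 0},R]$, combine this with $p_{\infty}\geq 0$ on $(s_{\bf 0},R)$ (pointwise limit of the positive even partial sums) to conclude $p_{\infty}\equiv 0$ there, and rule this out by the identity theorem. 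Both mechanisms are sound: the paper's is more elementary, needing only continuity and pointwise convergence, while yours avoids the explicit construction at the cost of invoking analyticity, which is freely available here since $p_{\infty}$ is a power series with infinite radius of convergence. A small bonus of your write-up is that you make explicit the straddling argument ($r_{n}<s_{\bf 0}<r_{n+1}$ or vice versa, according to parity) that converts (\ref{apxdif}) into (\ref{aproot}); the paper establishes (\ref{elim}) and the straddling facts but leaves that final implication implicit.
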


\begin{proof}
For each $m\in \mathbb{N} \cup \{0\}$, we have $p_{n_{0}+2m}(0)=1$. Note that Lemma \ref{lpoli} implies $p_{n_{0}+2m}(r_{n_{0}})<p_{n_{0}}(r_{n_{0}})=0$. Therefore, $p_{n_{0}+2m}$ has a minimal positive root, $r_{n_{0}+2m}$, for all $m\in \mathbb{N} \cup \{0\}$. Additionally, Lemma \ref{despp} indicates that $p_{\infty}(x)<p_{n_{0}+2m}(x)$ in $(0,r_{n_{0}}]$ and since $p_{\infty}(s_{\bf 0})=0<p_{n_{0}+2m}(s_{\bf 0})$, it follows that $s_{\bf 0}<r_{n_{0}+2m}$. Furthermore, according to Lemma \ref{lpoli} we have $p_{n_{0}+2m+2}(r_{n_{0}+2m})<p_{n_{0}+2m}(r_{n_{0}+2m})=0$, implying $r_{n_{0}+2m+2}<r_{n_{0}+2m}$. Consequently, the sequence $(r_{n_{0}+2m})_{m}$ is monotonically decreasing and bounded below by  $s_{\bf 0}$. This leads to the conclusion that
$$ r:=\lim_{m\rightarrow \infty} r_{n_{0}+2m}=\inf_{m\in \mathbb{N}\cup \{0\}} \{r_{n_{0}+2m}\}\geq s_{\bf 0}.$$
Now, let us demonstrate that $s_{\bf 0}=r$. Suppose, for contradiction, that $s_{\bf 0}<r$. First, let us consider the expression
\begin{equation*}
p^{\prime}_{\infty}(x)-p^{\prime}_{n_{0}}(x) 
= \sum_{k=0}^{\infty}a_{n_{0}+2k}\, x^{n_{0}+2k+1} \left((n_{0}+2k+2)\frac{\Gamma(2n_{0}+4k+5-\alpha)}{\Gamma(2n_{0}+4k+6)}x-(n_{0}+2k+1)\right).
\end{equation*}
Using the monotonicity property of $g$ as established in Lemma \ref{ldescre}, for $x\in (0,r_{n_{0}}]$ and $k\geq 1$, we have
\begin{equation*}
\frac{n_{0}+2k+1}{n_{0}+2k+2}\cdot \frac{\Gamma(2n_{0}+4k+6)}{\Gamma(2n_{0}+4k+5-\alpha)} > 
\frac{n_{0}+1}{n_{0}+2}\cdot \frac{\Gamma(2n_{0}+6)}{\Gamma(2n_{0}+5-\alpha)}\geq r_{n_{0}}\geq x,
\end{equation*}
which implies that $p^{\prime}_{\infty}(s_{\bf 0})<p^{\prime}_{n_{0}}(s_{\bf 0})\leq 0$. The continuity of $p^{\prime}_{\infty}$ implies the existence of a $\delta > 0$ such that $p_{\infty}(x)<p_{\infty}(s_{\bf 0})=0$, for all $s_{\bf 0}\leq x < s_{\bf 0}+ \delta < r$. Let us take an $\tilde{x} \in (s_{\bf 0},s_{\bf 0}+\delta)$. Since $\lim_{n\rightarrow \infty}p_{n}(\tilde{x})=p_{\infty}(\tilde{x})$ we can find a large enough even integer $n$ such that
$$|p_{\infty}(\tilde{x})-p_{n}(\tilde{x})|<-p_{\infty}(\tilde{x}),$$
which implies that $p_{n}(\tilde{x})<0$. Given that $p_{n}(0)=1$  and by applying the intermediate value theorem, it follows that  $r_{n}<s_{\bf 0} + \delta < r$. However, this conclusion is contradictory, as we have established that $r\leq r_{n}$ for all even integer $n$. Consequently, it must be that $s_{\bf 0}=r$.

Given $\varepsilon>0$, there exists an integer $\tilde{n}$, such that if $i\geq \tilde{n}$ and  $j\geq \tilde{n}$, with $i$ an odd integer and $j$ an even integer, then 
$$|r_{i}-s_{\bf 0}|<\frac{\varepsilon}{2}, \quad |r_{j}-s_{\bf 0}|<\frac{\varepsilon}{2}.$$
Consequently, if $n\geq \tilde{n}$, we have
$$|r_{n+1}-r_{n}|<|r_{n+1}-s_{\bf 0}|+|r_{n}-s_{\bf 0}|<\varepsilon.$$
Thus, the limit as expressed in (\ref{elim}) is effectively established.
\end{proof}

In this context, we have find an approximation, $r_{\tilde{n}}$, of $s_{\bf 0}$. Therefore, if  $p_{\infty}(\lambda l^{2})=0$, it follows that
$$\frac{P}{EI\,\Gamma(2-\alpha)} \ l^{2} = s_{\bf 0}.$$
From this, we obtain an approximation critical load, see the formula (\ref{fcpandeo}).

\section{Some illustrative examples} \label{SecNE}

In this section, we will explore two illustrative examples to further elucidate the concepts.

\subsection{Non-existence of an even degree polynomial $p_{n_{0}}$ with a positive root}

In this subsection, we will take $\alpha=0.526$ and $n\in \mathbb{N}\cup \{0\}$. Observe that
\begin{equation*}
p_{\infty}(x)-p_{2n+1}(x)
= \sum_{k=0}^{\infty}a_{2n+2k+2}\, x^{2n+2k+2} \left(1-\frac{\Gamma(4n+4k+6.474)}{\Gamma(4n+4k+8)}x \right).
\end{equation*}
According to Lemma \ref{ldescre}, for $k\geq 1$,
$$\frac{\Gamma(4n+4k+8)}{\Gamma(4n+4k+6.474)} > \frac{\Gamma(4n+8)}{\Gamma(4n+6.474)},$$
therefore, $p_{\infty}>p_{2n+1}$ on $[0,\Gamma(4n+8)/\Gamma(4n+6.474)]$. This particularly implies that if
\begin{equation}
r_{2n+1}\in [0,\Gamma(4n+8)/\Gamma(4n+6.474)], \label{cplab}
\end{equation}
then $p_{\infty}$ does not have a root in $[0,r_{2n+1}]$. 

The Table \ref{T1} displays the first positive root $r_{m}$ of the polynomial $p_{m}$ along with the corresponding evaluation of the term  $\Gamma (2m+6)/\Gamma (2m+4.474)$. 
\begin{table}[h!]
\centering
\begin{tabular}{|c|c|c|c|c|c|c|c|c|c|c|c|}
\hline 
$p_{m}$ & $p_{1}$ & $p_{3}$ & $p_{5}$ & $p_{7}$ & $p_{9}$ & $p_{11}$ & $p_{13}$ & $p_{15}$& $p_{17}$ & $p_{19}$ & $p_{21}$\tabularnewline
\hline 
$r_{m}$ & 4.56 & 7.93 & 11.31 & 14.37 & 16.68 & 29.93 & 38.25 & 46.25 & 54.5 & 63.31 & 72.87 \tabularnewline
\hline
$\frac{\Gamma(2m+6)}{\Gamma\left( 2m+4.474\right) }$ & 18.3 & 37.3 & 60.6 & 87.5 & 117.5 & 150.5 & 186.2 & 224.5 & 265.1 & 308.0 & 353.1  \tabularnewline
\hline 
\end{tabular}
\caption{First positive root of the polynomial $p_{m}$.} \label{T1}
\end{table}

From Table \ref{T1}, we observe that the condition (\ref{cplab}) is fulfilled in each instance. Consequently, it can be inferred that $p_{\infty}$ has no roots within the interval $[0, r_{m}]$. Moreover, we notice that the root $r_{m}$ increases with the degree $m$ of the odd polynomial $p_{m}$. This observation, in conjunction with the graph of $p_{\infty}$ presented in Figure \ref{Gra526}, provides numerical evidence supporting the conclusion that $p_{\infty}$ lacks a positive root when $\alpha = 0.526$. In fact, for values of $\alpha$ less than $0.526$, the function $p_{\infty}$ also appears to be devoid of positive roots, as exemplified by the graphs of $p_{\infty}$ in Figure \ref{Gra14}, corresponding to $\alpha = 0.1$ and $\alpha = 0.4$.
\begin{figure}[!ht]   
    \begin{subfigure}[t]{.47\textwidth}
    \includegraphics[width=3.3in,height=2.5in]{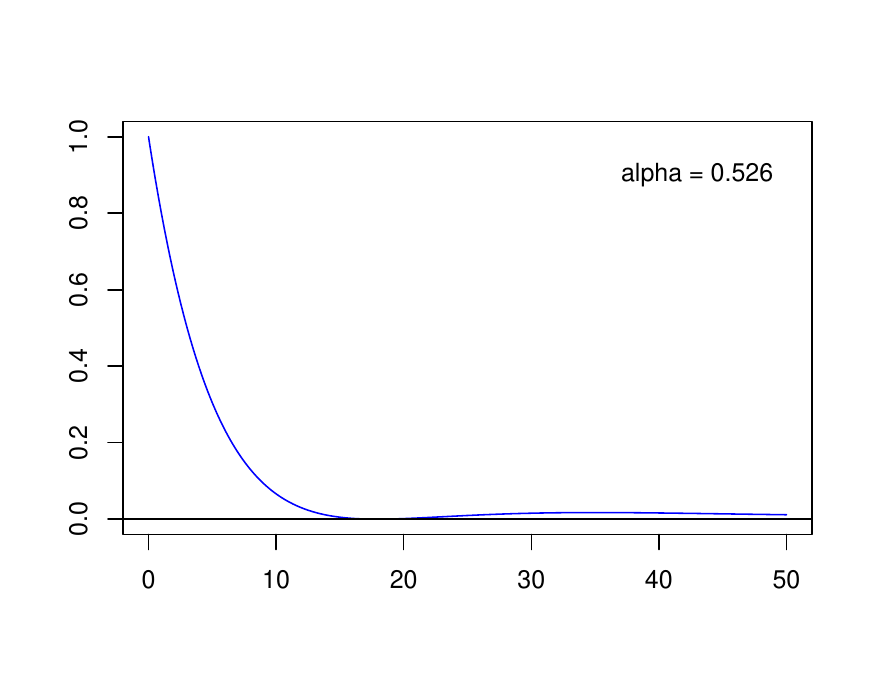}
    \end{subfigure}
    \hspace{0.0cm}
     \begin{subfigure}[t]{.47\textwidth}
    \includegraphics[width=3.3in,height=2.5in]{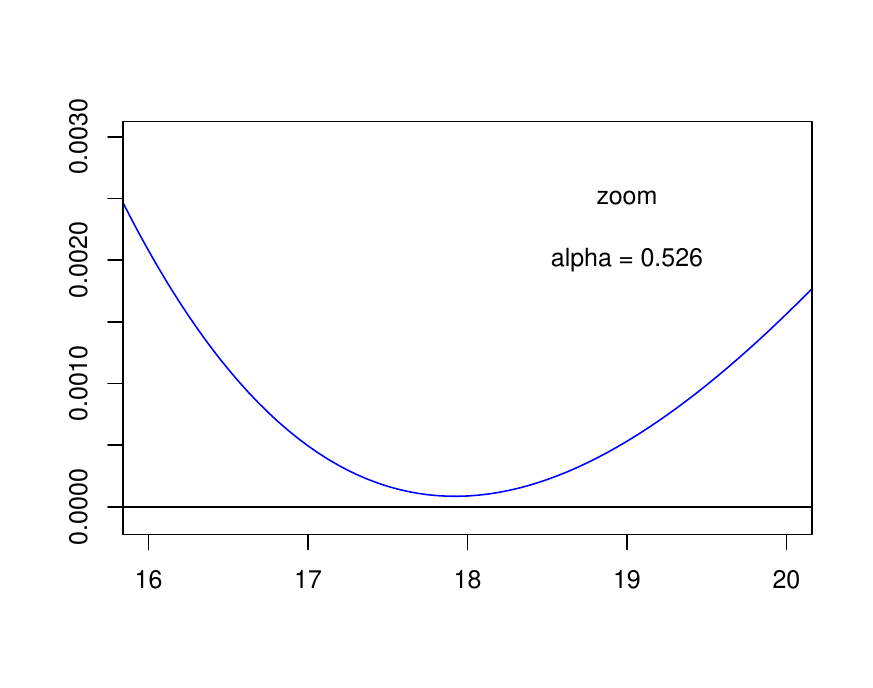}
    \end{subfigure}
 \vspace{-0.5cm}
   \caption{Graph of $p_{\infty}(x)$ for $\alpha= 0.526$.}   \label{Gra526} 
\end{figure}

\begin{figure}[!ht]   
 \begin{subfigure}[t]{.47\textwidth}
    \includegraphics[width=3.3in,height=2.5in]{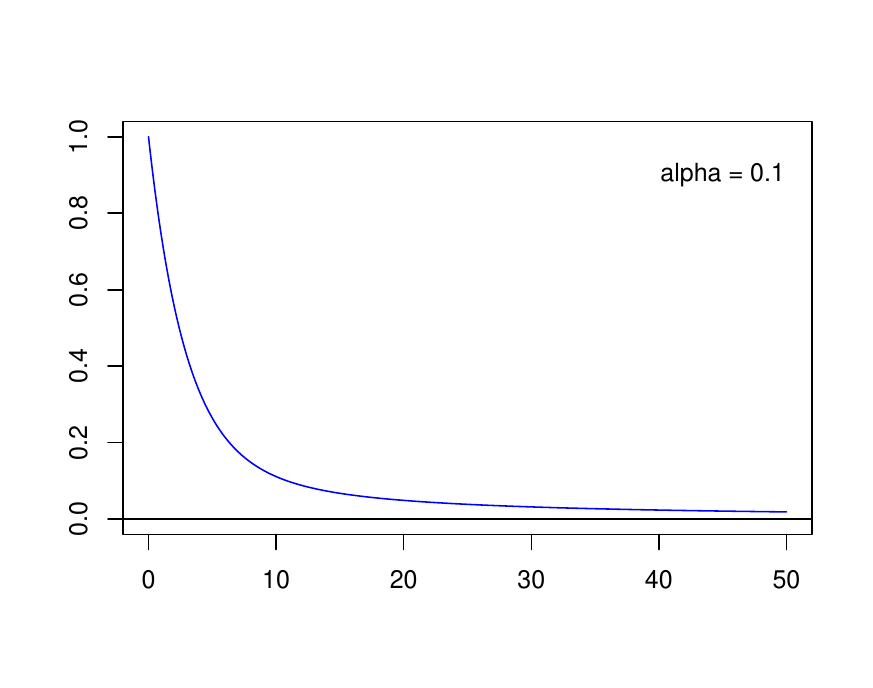}
    \end{subfigure}
     \begin{subfigure}[t]{.47\textwidth}
    \includegraphics[width=3.3in,height=2.5in]{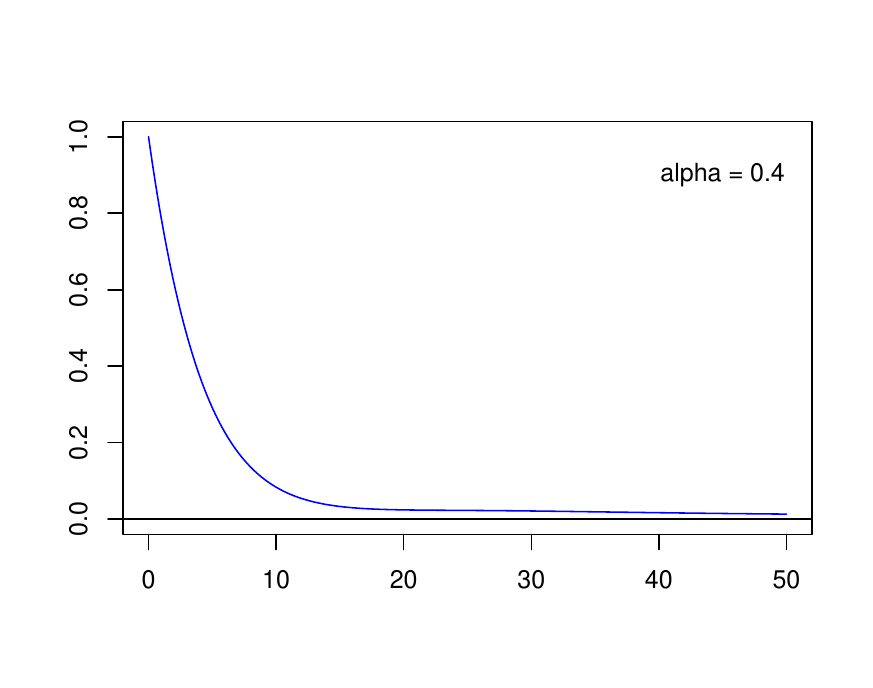}
    \end{subfigure}
 \vspace{-0.5cm}
   \caption{Graphs of $p_{\infty}(x)$ for $\alpha = 0.1$ and $\alpha = 0.4$.}   \label{Gra14} 
\end{figure}

\subsection{Existence of an even degree polynomial $p_{n_{0}}$ with a positive root}

\begin{figure}[!ht]   
    \begin{subfigure}[t]{.47\textwidth}
    \includegraphics[width=3.3in,height=2.5in]{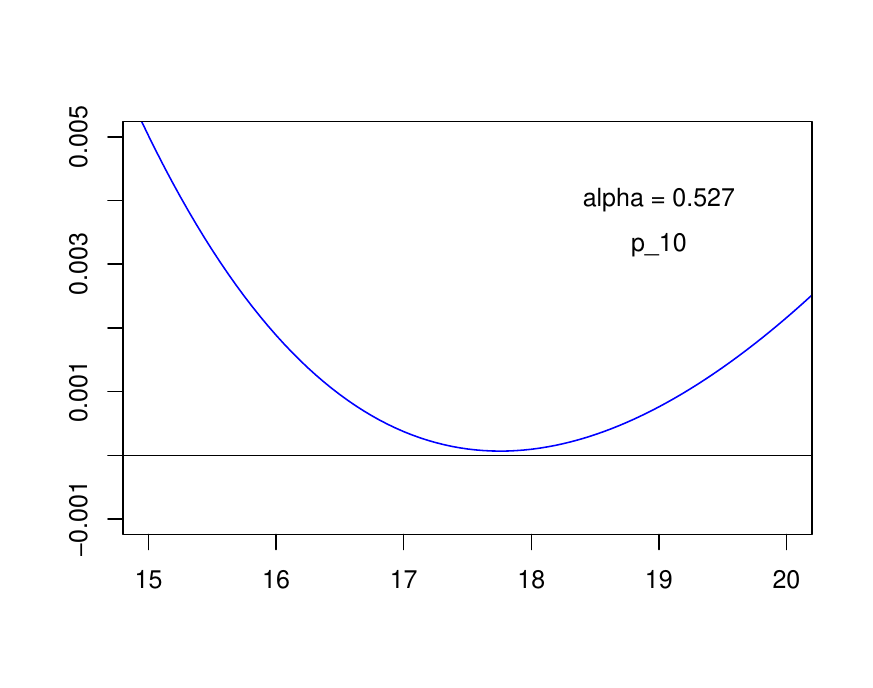}
    \end{subfigure}
    \hspace{.5cm}
     \begin{subfigure}[t]{.47\textwidth}
    \includegraphics[width=3.3in,height=2.5in]{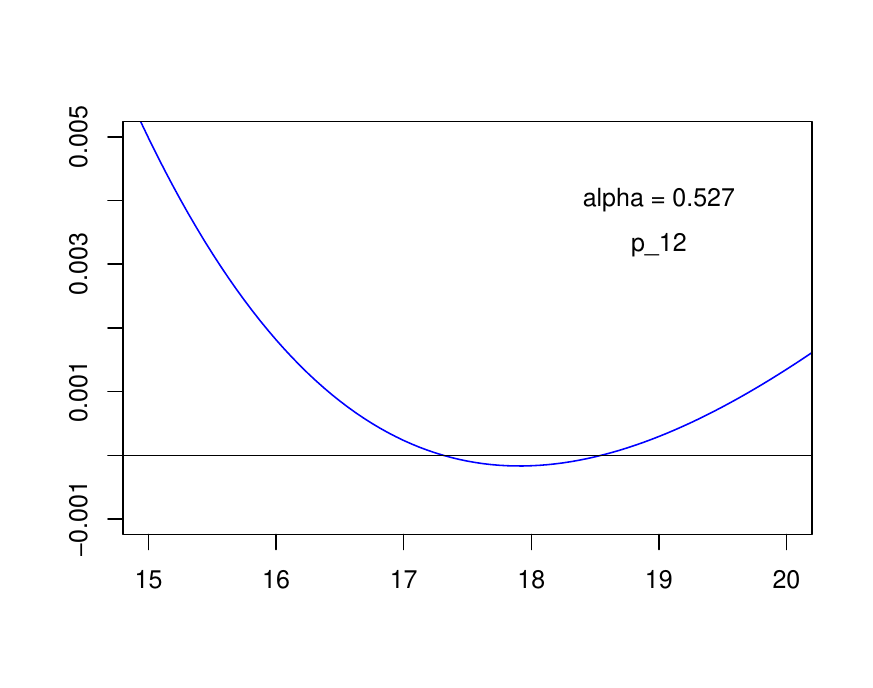}
    \end{subfigure}
 \vspace{-0.5cm}
   \caption{Graphs of polynomials $p_{10}(x)$ and $p_{12}(x)$.}   \label{Gp1012} 
\end{figure}

Now, we will consider $\alpha = 0.527$. Polynomials $p_{2}$, $p_{4}$, $p_{6}$, $p_{8}$ and $p_{10}$ do not have positive roots. However, polynomial $p_{12}$ does have a positive root, $r_{12}\approx 17.31$, as shown in Figure \ref{Gp1012}. Let us set $n_{0}=12$. Then 
\begin{equation}
168.65 \approx \frac{\Gamma(2n_{0}+5)}{\Gamma(2n_{0}+5-\alpha)}\geq r_{n_{0}} \approx 17.31. \label{dp}
\end{equation}

The Table \ref{T2} presents the roots of the polynomials $p_{12}$, $p_{13}$, $p_{14}$ and $p_{15}$ for various values of $\alpha$.
\begin{table}[h!]
\centering
\begin{tabular}{|c|c|c|c|}
\hline 
$r_{m} \diagdown \alpha $ & $0.527$ & $0.6$ & $0.7$ \tabularnewline
\hline 
$r_{12}$ & 17.31407406 & 12.97715575 & 11.45145303 \tabularnewline
\hline 
$r_{13}$ & 17.30910486 & 12.97715515 & 11.45145302  \tabularnewline
\hline 
$r_{14}$ & 17.30961111 & 12.97715518 & 11.45145302  \tabularnewline
\hline
$r_{15}$ & 17.30956418 & 12.97715518 & 11.45145302  \tabularnewline
\hline
\end{tabular}
\caption{First positive root of $p_{12}$, $p_{13}$, $p_{14}$ and $p_{15}$.} \label{T2}
\end{table}

From this, we observe that $r_{n_{0}+1} \approx 17.3$. The inequality (\ref{dp}) confirms that  Assumption A is satisfied. Consequently, based on Theorem \ref{Th1}, it follows that the first positive root of $p_{\infty}$ lies within the interval $(17.3,17.31)$.\\

Now, let's assume we aim to find the root of $p_{\infty}$ with an error margin less than $\varepsilon=1\times 10^{-3}$. To achieve this, it is necessary to ensure that Assumption B is fulfilled. Observing the graph of  $p_{12}$ (see Figure \ref{Gp1012}) we can infer that  $p_{12}$ is decreasing in the interval $(16,17.5)$. Consequently, $p_{12}^{\prime}<0$ on $[r_{13},r_{12}]\approx [17.3,17.31]$, and 
$$156.6 \approx \frac{n_{0}+1}{n_{0}+2} \cdot \frac{\Gamma(2n_{0}+6)}{\Gamma(2n_{0}+5-\alpha)} \geq r_{n_{0}} \approx 17.31,$$
thus Assumption B is satisfied. Recall that $\alpha = 0.527$ and 
$$|r_{14}-r_{15}|=4.693\times 10^{-5}< 1 \times 10^{-3},$$
which, according to Theorem \ref{Th2}, implies that
\[
|s_{\bf 0}-r_{14}|< 1 \times 10^{-3}.
\]
Therefore, $r_{14} = 17.30961111$ serves as the desired approximation.\\

The function $p_{\infty}$ possesses a positive root when the value of $\alpha$ exceeds $0.527$. This assertion is supported by the graphical representations of $p_{\infty}$ for $\alpha = 0.6$ and $\alpha = 0.8$, as depicted in Figure \ref{G68}. Additionally, Table \ref{T2} illustrates that the root $r_{m}$ tends to stabilize rapidly with increasing values of $\alpha$. Consequently, the root of $p_{12}$  serves as an excellent approximation for the root of $p_{\infty}$ when $\alpha$ is greater than $0.6$ or $0.7$.
\begin{figure}[!ht]   
    \begin{subfigure}[t]{.47\textwidth}
    \includegraphics[width=3.3in,height=2.5in]{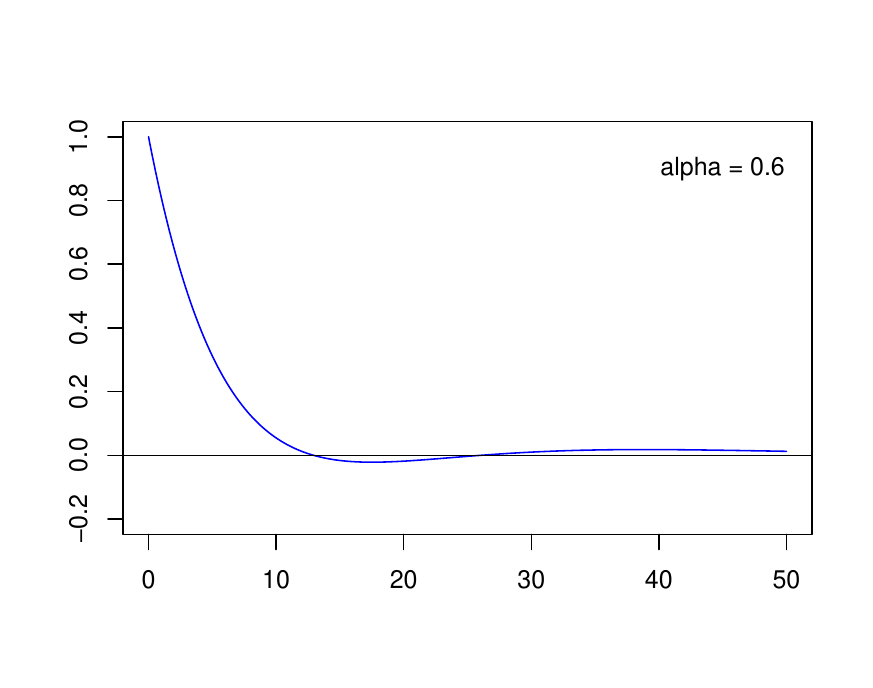}
    \end{subfigure}
    \hspace{.5cm}
     \begin{subfigure}[t]{.47\textwidth}
    \includegraphics[width=3.3in,height=2.5in]{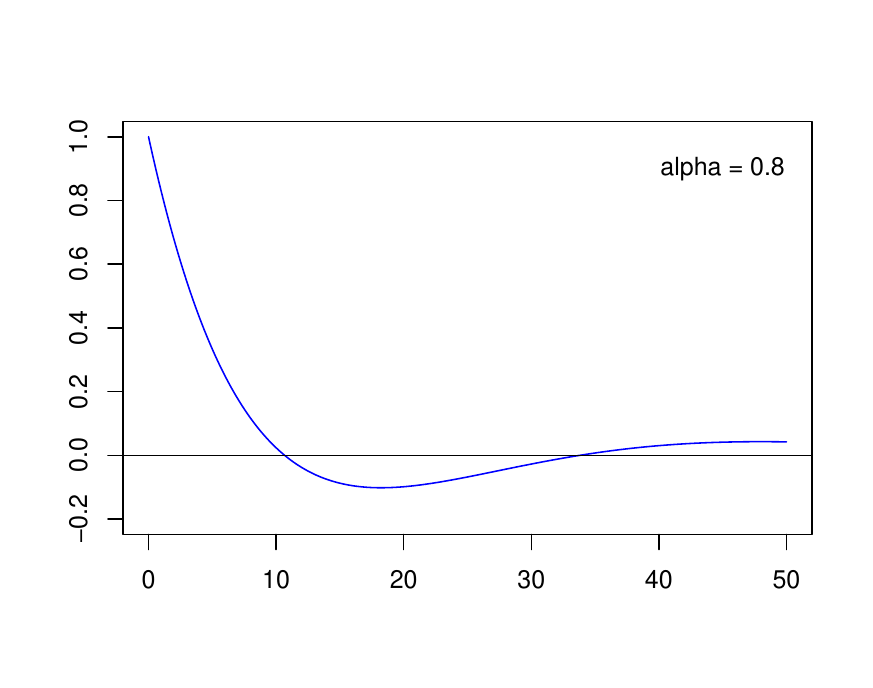}
    \end{subfigure}
 \vspace{-0.5cm}
   \caption{Graphs of $p_{\infty}(x)$ for $\alpha=0.6$ and $\alpha=0.8$.}   \label{G68} 
\end{figure}

\section{Conclusions} \label{SecCon}

In this section, we will address the results obtained. Initially, we have deduced a formula for the radius of curvature that incorporates the fractional index $\alpha \in (0,1]$. This formula has been pivotal in formulating our fractional model for column buckling. Subsequently, based on the hypothesis that a polynomial can accurately approximate the elastic curve, we have established the existence of a critical buckling force (Theorem \ref{Th2}). Furthermore, we have managed to calculate an approximation of this critical buckling force that, within a previously defined margin of error (Theorem \ref{Th3}), closely approaches the real force. We observe that for values of $\alpha \leq 0.526$, the critical force does not exist, while for $\alpha \geq 0.527$, it exists.

Expression (\ref{EEq}) reveals substantial potential for diversification. By merely altering the curvature moment of the structure of interest, we can derive various fractional differential equations. The real challenge lies in proving that the solution to the fractional differential equation reaches a positive minimum and in developing an effective numerical method for its calculation, as we have demonstrated in this work.

Another important area of opportunity is the exploration of alternative models for beam buckling. This is a vast field, as there are numerous ways in which a beam can buckle (see \cite{TG}). Moreover, for each model, different types of fractional derivatives (beyond the Caputo derivative) can be considered, allowing for physical verification of which derivative best captures the buckling behavior.

It is important to acknowledge several limitations of this study. The first is that the equipment required for the physical validation of the fractional model is very expensive, at least for us. However, other researchers may have access to such equipment or the necessary resources to continue developing our work. The second limitation is theoretical: the initial approximations to ${\bf s_{0}}$ are determined by graphically analyzing the roots of the polynomials $p_{even}$ (see Assumption A).

\subsection*{Acknowledgment}
The authors JVM and MRA received partial support from grants PIM22-1 and PIM23-6, respectively, from the Universidad Aut\'onoma de Aguascalientes.

\bibliographystyle{elsarticle-harv}
\bibliography{Vigas2_v2}

\end{document}